\newtheorem{theorem}{Theorem}[section]
\newtheorem{proposition}[theorem]{Proposition}
\def\R{{\mathbb R}}
\newcommand{\xR}{{]}{-\infty},+\infty]}
\newcommand{\Rex}{\xR}
\newcommand{\ps}{\smallbreak}
\newcommand{\lsc}{lower semicontinuous}
\newcommand{\del}{\partial}
\newcommand{\dom} {{\rm dom} \kern.15em}
\newcommand{\cl} {{\rm cl} \kern.15em}
\newcommand{\tq}{:}
\newcommand{\la}{\langle}
\newcommand{\ra}{\rangle}
\newcommand{\eps}{\varepsilon}
\newcommand{\tos}{\rightrightarrows}
\newcommand{\bx}{\bar{x}}
\newcommand{\xb}{\bar{x}}
\newcommand{\xt}{x_\eps}
\newcommand{\yt}{y_\eps}
\begin{document}
\thispagestyle{empty}

\title
{Br{\o}ndsted-Rockafellar property of subdifferentials of prox-bounded functions}

\author{Marc Lassonde}

\address{Universit\'e des Antilles et de la Guyane,
  97159 Pointe \`a Pitre, France}

\email{marc.lassonde@univ-ag.fr}

\begin{abstract}
We provide a new proof that the subdifferential of a
proper lower semicontinuous convex function on a Banach space
is maximal monotone by adapting the pattern commonly used in the Hilbert setting.
We then extend the arguments to show more precisely that subdifferentials of
proper \lsc\ prox-bounded functions possess the Br{\o}ndsted-Rockafellar property.
\end{abstract}

\date{June 23, 2013}

\subjclass[2010]{Primary 47H05; Secondary 49J52, 49J53}

\keywords{Subdifferential, maximal monotonicity, convex function, prox-bounded function,
Br{\o}ndsted-Rockafellar property, variational principle}

\maketitle

Maximal monotonicity is a key property of subdifferentials of
proper \lsc\ convex functions on a Banach space.
The first proof of this fact in the special case of a Hilbert space
was given by Moreau \cite{Mor65} in 1965 as an application of his theory of prox functions
(and motivated by Minty's characterization of maximal monotonicity in a Hilbert space
\cite{Min62}).
Later on, Brezis \cite{Brebook73} proposed a more direct argument to demonstrate
the property in the Hilbert case. 
The general situation of an arbitrary Banach space
was handled by Rockafellar \cite{Roc70b} in 1970.
Since then, various proofs were given till recently,
see \cite{MS08,Sim09} and the references therein.
In this paper, we present yet another proof and discuss its extensions.

A standard method for proving the maximality of a monotone operator $T:X\tos X^*$
in a Hilbert space $X$, or more generally in a reflexive Banach space $X$, is to show that
$T+J$ (where $J:X\tos X^*$ is the duality mapping) is onto. This argument does
not work in a non-reflexive space since $T+J$ is not onto in general.
However, $T+J$ may have a dense range and
it is easily seen that, for any operator $T$ on any Banach space $X$,
if for every $x\in X$, $T(x+.)+J$ has a dense range, then $T$ contains
all its monotonically related points.
So, to prove the maximal monotonicity of the subdifferential $\del f$ of a convex function $f$, it suffices
to show that $\del f(x+.)+J$ has a dense range. It turns out that this
readily follows from Br{\o}ndsted-Rockafellar's approximation theorem
combined with the subdifferential sum rule. The details of this argument are given in the
first section.

In the second section, we come back on the two steps of the proof.
We show that the dense range property of the operator $T+\lambda J$ is in fact enjoyed by
any subdifferential $T=\del f$ of any (possibly non-convex) prox-bounded function.
And we show that such a dense range property of the operator $T+\lambda J$ actually
forces $T$ to possess a property reminiscent of Br{\o}ndsted-Rockafellar's, roughly:
for every $\eps\ge 0$, the points in $X\times X^*$ which are $\eps$-monotonically related to the graph of $T$
belong to a ($\sqrt{\lambda^{-1}\eps}\times \sqrt{\lambda\eps}$)-entourage of the graph of $T$.
Combining these two results then establishes that subdifferentials of prox-bounded functions
satisfy a Br{\o}ndsted-Rockafellar type property.
This extends and simplifies known results for convex functions.


\section{Maximal monotonicity of the convex subdifferential}
In the following, $X$ is a real Banach space with unit ball $B_X$,
$X^*$ is its topological dual with unit ball $B_{X^*}$,
and $\la .,. \ra$ is the duality pairing.
The closed ball centered at a point $x\in X$ with radius $\eps>0$ is denoted by
$B(x,\eps):=\{ y\in X\tq \|x-y\|\le \eps\}$.
The closure of a subset $A\subset X$ is written $\overline{A}$ or $\cl(A)$ as well.
All the functions $f : X\to\Rex$ are assumed to be lower semicontinuous
and \textit{proper}, which means that
the set  $\dom f:=\{x\in X\tq f(x)<\infty\}$ is nonempty.
Set-valued operators $T:X\rightrightarrows X^*$
are identified with their graph $T\subset X\times X^*$,
so $x^*\in Tx$ is equally written as $(x,x^*)\in T$.
The \textit{domain} and the \textit{range} of $T$ are the sets respectively given by
$D(T)=\{ x\in X : Tx\ne \emptyset\,\}$ and
$R(T)=\{ x^*\in X^* : \exists x\in X : x^*\in Tx \}$.
\smallbreak
A set-valued operator $T:X\tos X^*$,
or graph $T\subset X\times X^*$,
is said to be {\it monotone} provided
$\langle y^*-x^*,y-x\rangle \ge 0$ for every $(x,x^*)\in T$ and $(y,y^*)\in T$,
and {\it maximal monotone} provided it is monotone and not properly contained in any other
monotone operator.
\smallbreak
The \textit{subdifferential} of a proper convex function $f:X\to\Rex$
is the set-valued operator
$\partial f:X\tos X^*$ given by
$$
\partial f(x):= \bigl\{x^*\in X^*: \langle x^*, y-x \rangle +f(x)\le f(y),\,\forall y\in X\bigr\},
$$
and the {\it duality operator} from $X$ into $X^*$ is the set-valued operator
$J:X\tos X^*$ given by
$$
J(x):= \bigl\{x^*\in X^*: \langle x^*, x \rangle = \|x\|^2 = \|x^*\|^2\bigr\}.
$$
It is easily verified that $J(x)=\partial j(x)$ where
$j(x)=(1/2)\|x\|^2$.
\smallbreak
We can now state and prove the fundamental theorem of Rockafellar \cite{Roc70b}:

\begin{theorem}\label{Roc}
Let $X$ be a Banach space. The subdifferential $\partial f$ of
any proper convex \lsc\ function $f:X\to\Rex$ is maximal monotone.
\end{theorem}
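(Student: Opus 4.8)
The plan is to follow the route announced in the introduction. I would first observe that monotonicity of $\del f$ is immediate from the subgradient inequality, so only maximality needs an argument. Assume $(\bx,\bar x^*)\in X\times X^*$ is monotonically related to $\del f$, i.e. $\la y^*-\bar x^*,y-\bx\ra\ge 0$ for all $(y,y^*)\in\del f$; the goal is to prove $\bar x^*\in\del f(\bx)$. A translation will remove $\bx$ and $\bar x^*$: setting $g(x):=f(x+\bx)-\la\bar x^*,x\ra$ --- again proper, \lsc\ and convex, with $\del g(x)=\del f(x+\bx)-\bar x^*$ --- it suffices to show that if $(0,0)$ is monotonically related to $\del g$ then $0\in\del g(0)$, that is, $0$ minimizes $g$.

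The core of the argument will be a density statement for $\del g+J$ near the origin. Since $g$ is proper \lsc\ convex it is minorized by a continuous affine function, so $h:=g+j$ with $j=(1/2)\|\cdot\|^2$ is proper, \lsc\ and bounded below; put $m:=\inf_X h\in\R$. Fixing $\eps>0$ and choosing $x_0$ with $h(x_0)\le m+\eps$, one has $0\in\del_\eps h(x_0)$, so the Br{\o}ndsted--Rockafellar approximation theorem produces $(\xt,u^*)\in\del h$ with $\|u^*\|\le\sqrt{\eps}$ (the accompanying bound $\|\xt-x_0\|\le\sqrt{\eps}$ will not be needed). As $j$ is convex and continuous, the Moreau--Rockafellar sum rule is valid in an arbitrary Banach space and yields $\del h=\del g+J$; hence there are $x_\eps^*\in\del g(\xt)$ and $z_\eps^*\in J(\xt)$ with $\|x_\eps^*+z_\eps^*\|\le\sqrt{\eps}$.

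Monotone relatedness then forces everything to converge. From $(\xt,x_\eps^*)\in\del g$ I get $\la x_\eps^*,\xt\ra\ge 0$, and combining the identity $\la x_\eps^*,\xt\ra=\la x_\eps^*+z_\eps^*,\xt\ra-\|\xt\|^2$ with $\la z_\eps^*,\xt\ra=\|\xt\|^2$, $\|z_\eps^*\|=\|\xt\|$ and the bound above gives $\|\xt\|^2\le\sqrt{\eps}\,\|\xt\|$, hence $\|\xt\|\le\sqrt{\eps}$ and then $\|x_\eps^*\|\le\|x_\eps^*+z_\eps^*\|+\|z_\eps^*\|\le 2\sqrt{\eps}$. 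Thus $(\xt,x_\eps^*)\to(0,0)$ strongly in $X\times X^*$ as $\eps\downarrow 0$. Letting $\eps\downarrow 0$ in the subgradient inequality $g(\xt)+\la x_\eps^*,y-\xt\ra\le g(y)$ and using the lower semicontinuity of $g$ yields $g(0)\le\liminf_{\eps\downarrow 0}g(\xt)\le g(y)$ for every $y\in X$, i.e. $0\in\del g(0)$, which is what we wanted.

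The step I expect to be the main obstacle is the density statement in the second paragraph: in a reflexive space one would just solve $0\in\del g(x)+J(x)$ using surjectivity of $\del g+J$, but in a general Banach space $\del g+J$ need not be onto, so one has to settle for dense range near the point at hand. The key point is that this density is exactly what the Br{\o}ndsted--Rockafellar theorem provides once it is paired with the subdifferential sum rule; the rest --- the computation with the duality map and the closedness of the graph of $\del g$ under strong convergence --- is routine.
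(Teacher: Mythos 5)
Your proposal is correct and follows essentially the same route as the paper: Br{\o}ndsted--Rockafellar applied to an approximate minimizer of $g+j$ together with the Moreau--Rockafellar sum rule gives near-zero points of $\del g+J$, and the same duality-map estimate driven by monotone relatedness forces $(\xt,x_\eps^*)\to(0,0)$, after which lower semicontinuity (equivalently, norm-closedness of the subdifferential graph, as in the paper) yields the conclusion. The only cosmetic differences are that you translate by $(\bx,\bar x^*)$ at the outset instead of applying the density claim to $f(x+\cdot)-x^*$, and you use an $\eps$-indexed family with the $\sqrt{\eps}$ calibration rather than the paper's $\eps^2$-minimizer and sequences.
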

\begin{proof}
Let us first recall the simple proof proposed in \cite{Brebook73}
for the special case when $X$ is a Hilbert space.
By the Hahn-Banach theorem, $f\ge \ell +\alpha$ for some $\ell\in X^*$ and $\alpha\in \R$,
and since $j+\ell$ is coercive
($j(x)+\ell(x)=(1/2)\|x\|^2+\ell(x)\to +\infty$ as $\|x\|\to +\infty$),
it follows that $f+j$ is also coercive.
Hence $f+j$ attains its minimum at some $\xb\in X$,
that is, $0\in\del(f+j)(\xb)$.
Since $\del j=\nabla j=I$ (the identity mapping on $X$), we readily get
$0\in (\del f+ I)(\xb)$, so $0\in R(\del f+I).$
We conclude that $X^*= R(\del f+I),$
which is easily seen to imply that $\del f$ is maximal monotone
(this is the elementary part in Minty's characterization of maximal monotonicity \cite{Min62}).
\ps
Let us now adapt the above argument in the case where $X$ is an arbitrary Banach space.
First, we claim that
\begin{equation}\label{claim}
 0\in\overline{R(\del f+J)}.
\end{equation}
Indeed,
since $f\ge \ell +\alpha$ for some $\ell\in X^*$ and $\alpha\in \R$,
and since $j+\ell$ is bounded from below,
we derive that $f+j$ is bounded from below.
Let $\eps>0$ and $y_\eps\in \dom f$ such that
\begin{equation*}\label{Roc2}
(f+j)(y_\eps)\le (f+j)(y) +\eps^2,\  \forall y\in X.
\end{equation*}
By Br{\o}ndsted-Rockafellar's approximation theorem \cite{BR65}, there exist
$x^*_\eps\in X^*$ with $\|x^*_\eps\|\le \eps$ and $z_\eps\in X$
(with $\|z_\eps-y_\eps\|\le\eps$)
such that
$x^*_\eps\in\del(f+j)(z_\eps)$.
From Rockafellar's subdifferential sum rule, we infer that
$x^*_\eps\in \del f(z_\eps)+J(z_\eps)$.
So, for any $\eps>0$ there exists $x^*_\eps\in R(\del f+J)$
with $\|x^*_\eps\|\le\eps$.
This proves the claim.
\medbreak
Let us now prove the maximal monotonicity of $\partial f$.
Let $(x,x^*)\in X\times X^*$ such that
\begin{equation}\label{Roc1}
\langle y^*-x^*,y-x\rangle \ge 0,\  \forall (y,y^*)\in \partial f.
\end{equation}
We have to show that $x^*\in \del f(x)$.
Applying (\ref{claim}) to the convex \lsc\ function $f(x+.)-x^*$,
we get that
$$x^*\in \overline{R(\partial f(x+.)+J)}.$$
Thus, there are $(x_n^*)\subset X^*$ with $x_n^*\to x^*$
and $(h_n)\subset X$ such that $x_n^*\in \del f(x+h_n)+J(h_n)$.
Let $(y_n^*)\subset X^*$ such that
$$y_n^*\in \del f(x+h_n)\quad\mbox{and}\quad x_n^*-y_n^*\in J(h_n).$$
By definition of $J$, we have
\begin{equation}\label{Roc3}
\la x_n^*-y_n^*, h_n\ra=\|x_n^*-y_n^*\|^2=\|h_n\|^2.
\end{equation}
From (\ref{Roc1}) and $y_n^*\in \del f(x+h_n)$, we get
$\la x^*-y_n^*, x+h_n-x\ra\le 0$, so
$$
\|h_n\|^2=\la x_n^*-x^*, h_n\ra+\la x^*-y_n^*, x+h_n-x\ra
\le \la x_n^*-x^*, h_n\ra
\le \|x_n^*-x^*\|\|h_n\|.
$$
It follows that $h_n\to 0$, so, by (\ref{Roc3}),
$\|x_n^*-y_n^*\|\to 0$, hence $y_n^*\to x^*$.
Since $\del f$ has a norm$\times$norm-closed graph and $y_n^*\in \del f(x+h_n)$,
we conclude that $x^*\in \del f(x)$.
This completes the proof.
\end{proof}
\section{Extension}
In this section,
we strengthen the maximal monotonicity property of the subdifferential and 
we show how this property can be extended beyond the convex case. 
We are supposed to be given a subdifferential $\del$ that
associates an operator $\del f:X\rightrightarrows X^\ast$ to each function $f$ on $X$
so that it coincides with the convex subdifferential when $f$ is convex.
The two main tools used in the convex situation,
Br{\o}ndsted-Rockafellar's approximation theorem
and Rockafellar's subdifferential sum rule, are respectively 
replaced by Ekeland's variational principle \cite{Eke74} and the
subdifferential separation principle. They read as follows:
\ps
\textit{Variational Principle}.
For any \lsc\ function $f$ on $X$, $\bx\in\dom f$ and $\eps>0$ such that
$
f(\xb)\le \inf f(X) +\eps,
$
and for any $\lambda>0$, there exists $x_\lambda\in X$ such that
$\|x_\lambda-\xb\|\le\lambda$, $f(x_\lambda)\le f(\xb)$ and
the function $x\mapsto f(x)+(\eps/\lambda)\|x-x_\lambda\|$
attains its minimum at $x_\lambda$.
\ps
\textit{Separation Principle}.
For any \lsc\ functions $f,\varphi$ on $X$ with $\varphi$ convex Lipschitz near $\xb\in\dom f \cap\dom \varphi$,
if $f+\varphi$ admits a local minimum at $\xb$, then
$0\in \del f(\xb)+ \del \varphi(\xb).$
\ps
\textit{Examples}.
The Clarke subdifferential, the Michel-Penot subdifferential,
the Ioffe subdifferential satisfy the Separation Principle in any Banach space.
The limiting versions of the elementary subdifferentials
(proximal, Fr\'echet, Hadamard, G\^ateaux, \ldots) satisfy the Separation Principle
in appropriate Banach spaces (see, e.g., \cite{Iof12,JL12} and the references therein).
\ps
For a function $f$ on $X$, we let
\begin{equation}\label{dom}
\dom f^*=\{x^*\in X^*:\inf (f-x^*)(X)>-\infty\}.
\end{equation}
(This is the domain of the conjugate function 
$f^*:x^*\in X^*\mapsto f^*(x^*):=\sup (x^*-f)$.)
\ps
Merging the above two principles yields: 

\begin{proposition}\label{BPnonconvex}
Let $X$ be a Banach space, $f:X\to\Rex$ be proper \lsc\ and $\varphi:X\to\R$
be convex locally Lipschitz.
Then, $\dom (f+\varphi)^*\subset \cl({R(\del f+\del \varphi)})$.
\end{proposition}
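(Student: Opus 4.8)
The plan is to follow the pattern that established claim \eqref{claim} in the proof of Theorem \ref{Roc}, using Ekeland's variational principle where the Br{\o}ndsted--Rockafellar theorem was used and the Separation Principle where Rockafellar's sum rule was used. Fix $x^*\in\dom(f+\varphi)^*$. Since $\varphi$ is finite and continuous on all of $X$, the function $g:=f+\varphi-x^*$ is proper, with $\dom g=\dom f$, and \lsc; by the definition \eqref{dom} of $\dom(f+\varphi)^*$ it is moreover bounded below on $X$. It will be enough to produce, for each $\eps>0$, a point of $R(\del f+\del \varphi)$ within distance $\eps$ of $x^*$, for this says exactly that $x^*\in\cl(R(\del f+\del\varphi))$.

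So fix $\eps>0$ and pick $\xb\in\dom g$ with $g(\xb)\le\inf g(X)+\eps$ (possible since $\inf g(X)$ is a finite real, being $>-\infty$ by boundedness below and $<+\infty$ by properness). Applying the Variational Principle to $g$ with $\lambda=1$ yields a point $\xt\in X$ at which $x\mapsto g(x)+\eps\|x-\xt\|$ attains its global minimum. Setting $\psi(x):=\varphi(x)-\la x^*,x\ra+\eps\|x-\xt\|$, this says precisely that $f+\psi$ attains its minimum at $\xt$. The function $\psi:X\to\R$ is convex and locally Lipschitz, being the sum of the locally Lipschitz convex function $\varphi$, the (Lipschitz) continuous linear functional $-x^*$, and the Lipschitz convex function $\eps\|\cdot-\xt\|$; moreover $\xt\in\dom f$ since $g(\xt)\le g(\xb)<\infty$. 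Hence the Separation Principle applies and gives $0\in\del f(\xt)+\del\psi(\xt)$.

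The last step is to decompose $\del\psi(\xt)$ by the convex calculus: as the three summands defining $\psi$ are continuous, the Moreau--Rockafellar sum rule gives $\del\psi(\xt)=\del\varphi(\xt)-x^*+\eps\,\del\|\cdot-\xt\|(\xt)$, and since the subdifferential of the norm at the origin is $B_{X^*}$ this yields $\del\psi(\xt)\subset\del\varphi(\xt)-x^*+\eps B_{X^*}$. Thus there are $u^*\in\del f(\xt)$, $v^*\in\del\varphi(\xt)$ and $b^*\in B_{X^*}$ with $u^*+v^*=x^*-\eps b^*$, so $x^*-\eps b^*\in(\del f+\del\varphi)(\xt)\subset R(\del f+\del\varphi)$, while $\|x^*-(x^*-\eps b^*)\|\le\eps$. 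Letting $\eps\downarrow 0$ completes the proof.

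I do not expect a serious obstacle. The only points requiring care are verifying that the Separation Principle genuinely applies — that is, that $\psi$ is convex and Lipschitz near $\xt$ and that $\xt$ lies in $\dom f\cap\dom\psi$ — together with the elementary identification $\del\|\cdot-\xt\|(\xt)=B_{X^*}$ and the qualification (continuity) needed for the convex sum rule; all of these are routine. Conceptually, the whole point is that Ekeland's penalty term $\eps\|\cdot-\xt\|$ is exactly what injects the controlled perturbation $\eps B_{X^*}$ into the range, playing here the role that $J$ (with its norm-$\le\eps$ selection) played in the convex proof.
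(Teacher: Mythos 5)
Your proof is correct and follows essentially the same route as the paper: Ekeland's principle applied to $f+\varphi-x^*$ produces a minimizer of $f+\psi$ with $\psi(x)=\varphi(x)-\la x^*,x\ra+\eps\|x-\xt\|$, then the Separation Principle plus the convex sum rule $\del\psi(\xt)\subset\del\varphi(\xt)-x^*+\eps B_{X^*}$ gives a point of $R(\del f+\del\varphi)$ within $\eps$ of $x^*$. The only cosmetic difference is your choice of an $\eps$-minimizer with $\lambda=1$ in the Variational Principle, where the paper takes an $\eps^2$-minimizer (implicitly with $\lambda=\eps$); both yield the same penalty coefficient $\eps$.
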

\begin{proof}
Let $x^*\in \dom (f+\varphi)^*$ and let $\eps>0$.
There is a point $\bx\in X$ such that
$$
(f+\varphi-x^*)(\xb)\le \inf (f+\varphi-x^*)(X) +\eps^2,
$$
so, by Ekeland's variational principle,
there is a point $\xt\in X$ such that
the function $x\mapsto f(x)+\varphi(x)+\la -x^*,x\ra+\eps\|x-\xt\|$ attains
its minimum at $\xt$.
Now, applying the Separation Principle with the given $f$ and
the convex locally Lipschitz function
$\psi:x\mapsto \varphi(x)+\la -x^*,x\ra+\eps\|x-\xt\|$ we obtain
a subgradient $\xt^*\in \del f(\xt)$ such that
$-\xt^*\in \del \psi(\xt)=\del \varphi(\xt)-x^*+\eps B_{X^*}$. So,
there is $\yt^*\in \del \varphi(\xt)$ such that  $\|x^*-\yt^*-\xt^*\|\le \eps$.
Thus, for every $\eps>0$ the ball $B(x^*,\eps)$ contains
$\xt^*+\yt^*\in \del f(\xt)+\del \varphi(\xt)\subset R(\del f+\del \varphi)$.
This means that $x^*\in \cl({R(\del f+\del \varphi)})$.
\end{proof}

Said differently, Proposition \ref{BPnonconvex} asserts that 
the set of all functionals $x^*$ in $X^*$ that belong to the
range of $\del f+\del \varphi$ is dense in the set of all those
functionals $x^*$ for which $f+\varphi-x^*$ is bounded below on $X$.
The case where $\varphi=0$ and $f=\delta_C$, the indicator of a nonempty closed
convex subset $C\subset X$, reads as follows:
{\it the set $R(\del \delta_C)$ of all functionals in $X^*$ which attain their
supremum on $C$ is dense in the set of all those
functionals which are bounded above on $C$.
}
This is half part of Bishop-Phelps's theorem.
(see, e.g., \cite[Theorem 3.18 (ii), p. 48]{Phe93}).
\ps
A function $f$ on $X$ is said to be \textit{prox-bounded}
if there exists $\lambda > 0$ such that the function $f+\lambda j$
is bounded from below;
the infimum $\lambda_f$ of the set of all such $\lambda$ is called the \textit{threshold}
of prox-boundedness for $f$ \cite[Definition 1.23, p. 21]{RW98}:
$$
\lambda_f:=\{\lambda>0 : \inf (f+\lambda j)>-\infty\}.
$$
Any convex \lsc\ function $g$ is prox-bounded with threshold $\lambda_g= 0$
since any such function is bounded below by a continuous affine function.
More generally, the sum $f+g$ of a prox-bounded $f$ and of a convex \lsc\ $g$
is prox-bounded with threshold $\lambda_{f+g}\le\lambda_f$.
It follows that for every $x^*\in X^*$, $\lambda_{f+x^*}=\lambda_f$.
It is also easily verified that $f(x+.)+\lambda j$ is bounded below for any
$x\in X$ and $\lambda>\lambda_f$ (see \cite[Exercise 1.24 \& Theorem 1.25, p. 21]{RW98}).

As a result of the foregoing, if $f$ is prox-bounded with threshold $\lambda_f$,
then for every $\lambda > \lambda_f$ one has
$$\forall x\in X,\ \dom (f(x+.)+\lambda j)^*=X^*.$$
From this and Proposition \ref{BPnonconvex} we immediately get:

\begin{proposition}\label{proxbo}
Let $X$ be a Banach space and let $f:X\to\Rex$ be proper, \lsc\ and prox-bounded
with threshold $\lambda_f$.
Then, for every $\lambda>\lambda_f$ one has
$$\forall x\in X,\ \cl({R(\del f(x+.)+\lambda J)})=X^*.$$
\end{proposition}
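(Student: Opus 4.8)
The plan is to read this off directly from Proposition \ref{BPnonconvex} by taking $\varphi=\lambda j$. First I would fix $x\in X$ and $\lambda>\lambda_f$ and note that $f(x+\cdot):X\to\Rex$ is again proper and \lsc, while $\lambda j$ is convex and, being finite and continuous on the Banach space $X$, locally Lipschitz; moreover, by positive homogeneity of the convex subdifferential, $\del(\lambda j)=\lambda\,\del j=\lambda J$. Hence Proposition \ref{BPnonconvex}, applied to the pair $\bigl(f(x+\cdot),\lambda j\bigr)$, yields
\[
\dom\bigl(f(x+\cdot)+\lambda j\bigr)^*\ \subset\ \cl\bigl(R(\del f(x+\cdot)+\lambda J)\bigr).
\]

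The remaining point is that the left-hand side is all of $X^*$, which is exactly the prox-boundedness computation recorded just before the statement. Concretely, for an arbitrary $x^*\in X^*$ the function $f-x^*$ is prox-bounded with the same threshold $\lambda_{f-x^*}=\lambda_f$, so $(f-x^*)(x+\cdot)+\lambda j$ is bounded below as soon as $\lambda>\lambda_f$. Since this function differs from $f(x+\cdot)+\lambda j-x^*$ only by the additive constant $-\langle x^*,x\rangle$, it follows that $\inf\bigl(f(x+\cdot)+\lambda j-x^*\bigr)>-\infty$, i.e. $x^*\in\dom\bigl(f(x+\cdot)+\lambda j\bigr)^*$. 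As $x^*$ was arbitrary, $\dom\bigl(f(x+\cdot)+\lambda j\bigr)^*=X^*$.

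Combining the two facts gives $X^*\subset\cl\bigl(R(\del f(x+\cdot)+\lambda J)\bigr)\subset X^*$, hence equality, for every $x\in X$ and every $\lambda>\lambda_f$, which is the assertion. I do not expect a genuine obstacle: the substantive work has already been done by Ekeland's variational principle and the Separation Principle inside Proposition \ref{BPnonconvex}. The only things to be careful about are the identification $\del(\lambda j)=\lambda J$ (and the local Lipschitz property of $\lambda j$) and the bookkeeping of the constant $\langle x^*,x\rangle$ when one simultaneously translates $f$ by $x$ and perturbs by $x^*$.
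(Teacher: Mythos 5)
Your argument is correct and is essentially the paper's own proof: the paper likewise obtains $\dom(f(x+\cdot)+\lambda j)^*=X^*$ from the prox-boundedness facts (invariance of the threshold under linear perturbations and boundedness below of the translated function) and then applies Proposition \ref{BPnonconvex} with $\varphi=\lambda j$, using $\del(\lambda j)=\lambda J$. Your explicit verification of the local Lipschitz property of $\lambda j$ and of the additive constant $\langle x^*,x\rangle$ only spells out details the paper leaves implicit.
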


We now elaborate upon the second argument in the proof of Theorem \ref{Roc}.
Given a set-valued operator $T:X\rightrightarrows X^*$,
or graph $T\subset X\times X^*$, and $\eps\ge 0$, we let
$$
T^\eps:=                                
\{\,(x,x^*)\in X\times X^* : \langle y^*-x^*,y-x\rangle \geq -\eps,\  \forall (y,y^*)\in T\,\}
$$
be the set of all pairs $(x,x^*)\in X\times X^*$  which are
{\it $\eps$-monotonically related} to $T$.
An operator $T$ is monotone provided $T\subset T^0$ and
maximal monotone provided $T= T^0$.
In our work \cite{JL13}, a non necessarily monotone operator $T$ was declared to be 
{\it monotone absorbing} provided $T^0\subset T$, and it was shown
that subdifferentials of arbitrary proper \lsc\ functions 
are monotone absorbing.
Here we show that subdifferentials $\del f$ of prox-bounded functions $f$ actually
satisfy a much stronger monotone absorption property,
reminiscent of Br{\o}ndsted-Rockafellar's, roughly:
for every  $\lambda>\lambda_f$ and $\eps\ge 0$,
$(\del f)^\eps$ is contained in a
($\sqrt{\lambda^{-1}\eps}\times \sqrt{\lambda\eps}$)-entourage of $\del f$.
This is based on the following observation:
\begin{proposition}\label{epsmono}
Let $T:X\tos X^*$ and $\lambda>0$.
Assume
$$
\forall x\in X,\ \cl(R(T(x+.)+\lambda J)=X^*.
$$
Then, for every $\eps\ge 0$ and $(x,x^*)\in T^\eps$ there exists a sequence
$((x_n,x^*_n))_n\subset T$ such that
$$
  \lim_n \|x-x_n\|\le\sqrt{\lambda^{-1}\eps} \mbox{ and }
\lim_n \|x^*-x^*_n\|\le\sqrt{\lambda\eps}.
$$
In short:
\begin{equation}\label{maxmonoBR}
\forall \eps\ge 0,\quad T^\eps\subset \cl\left(T+\sqrt{\lambda^{-1}\eps}B_X \times \sqrt{\lambda\eps}B_{X^*}\right).
\end{equation}
\end{proposition}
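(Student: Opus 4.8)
The plan is to replay the second half of the proof of Theorem \ref{Roc}, this time with an arbitrary tolerance $\eps\ge 0$ in place of $0$ and with the factor $\lambda$ in front of $J$ carried through the estimates. Fix $\eps\ge 0$ and $(x,x^*)\in T^\eps$. Since by assumption $x^*\in X^*=\cl(R(T(x+\cdot)+\lambda J))$, I would start by choosing sequences $(h_n)\subset X$, $(x_n^*)\subset X^*$ and $(w_n^*)\subset X^*$ with
$$
x_n^*\in T(x+h_n),\qquad w_n^*\in J(h_n),\qquad z_n^*:=x_n^*+\lambda w_n^*\to x^*,
$$
and then set $x_n:=x+h_n$, so that $(x_n,x_n^*)\subset T$. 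Everything reduces to estimating $\|h_n\|$.

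The key computation rests on two ingredients. First, $w_n^*\in J(h_n)$ yields the $\lambda$-weighted analogue of (\ref{Roc3}):
$$
\la z_n^*-x_n^*,h_n\ra=\lambda\la w_n^*,h_n\ra=\lambda\|h_n\|^2,\qquad \|z_n^*-x_n^*\|=\lambda\|w_n^*\|=\lambda\|h_n\|.
$$
Second, since $(x,x^*)\in T^\eps$ and $(x_n,x_n^*)\in T$, the defining inequality of $T^\eps$ tested against $(x_n,x_n^*)$ gives $\la x_n^*-x^*,h_n\ra\ge-\eps$, i.e.\ $\la x^*-x_n^*,h_n\ra\le\eps$. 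Setting $\delta_n:=\|z_n^*-x^*\|\to 0$ and combining,
$$
\lambda\|h_n\|^2=\la z_n^*-x_n^*,h_n\ra=\la z_n^*-x^*,h_n\ra+\la x^*-x_n^*,h_n\ra\le \delta_n\|h_n\|+\eps.
$$
Hence every $\|h_n\|$ solves $\lambda t^2-\delta_n t-\eps\le 0$; in particular $(\|h_n\|)$ is bounded, its terms lying below the larger root of this quadratic, which stays bounded since $\delta_n$ does. Passing to a subsequence, $\|h_n\|\to L$ for some $L\ge 0$, and letting $n\to\infty$ in the inequality gives $\lambda L^2\le\eps$, i.e.\ $L\le\sqrt{\lambda^{-1}\eps}$. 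Therefore $\|x-x_n\|=\|h_n\|\to L\le\sqrt{\lambda^{-1}\eps}$, while $\|x^*-x_n^*\|\le\|x^*-z_n^*\|+\|z_n^*-x_n^*\|=\delta_n+\lambda\|h_n\|\to\lambda L\le\sqrt{\lambda\eps}$; a final passage to a subsequence turns the latter $\limsup$ into a genuine limit, which gives the two displayed assertions of the proposition.

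The short form (\ref{maxmonoBR}) is then just a reformulation. With $(x_n,x_n^*)\subset T$ as above, for each $n$ replace $x-x_n$ by its radial retraction onto $\sqrt{\lambda^{-1}\eps}B_X$ and $x^*-x_n^*$ by its radial retraction onto $\sqrt{\lambda\eps}B_{X^*}$; the resulting pairs belong to $T+\sqrt{\lambda^{-1}\eps}B_X\times\sqrt{\lambda\eps}B_{X^*}$, and the $n$-th one is within $\max\bigl(\|x-x_n\|-\sqrt{\lambda^{-1}\eps},0\bigr)$ of $x$ in $X$ and within $\max\bigl(\|x^*-x_n^*\|-\sqrt{\lambda\eps},0\bigr)$ of $x^*$ in $X^*$, both quantities tending to $0$ by the limits just established. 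Hence $(x,x^*)\in\cl\bigl(T+\sqrt{\lambda^{-1}\eps}B_X\times\sqrt{\lambda\eps}B_{X^*}\bigr)$.

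I do not expect a genuine obstacle: the core is exactly the Hilbert-space-style manipulation already carried out in Theorem \ref{Roc}. The two points demanding care are keeping the powers of $\lambda$ consistent in the identities coming from $J(h_n)$ (so that $\sqrt{\lambda^{-1}\eps}$ and $\sqrt{\lambda\eps}$ come out symmetrically), and observing that the quadratic estimate only produces $\limsup$ bounds, which is why one must extract a convergent subsequence of $(\|h_n\|)$ — legitimate precisely because that same estimate keeps $(\|h_n\|)$ bounded.
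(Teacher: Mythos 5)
Your argument is correct and is essentially the paper's own proof: the same decomposition $z_n^*=x_n^*+\lambda w_n^*\to x^*$ coming from the dense range assumption, the same identities from $w_n^*\in J(h_n)$, the same quadratic inequality $\lambda\|h_n\|^2\le\|z_n^*-x^*\|\,\|h_n\|+\eps$ obtained from the $\eps$-monotone relation, and the same passage to (sub)sequential limits yielding the bounds $\sqrt{\lambda^{-1}\eps}$ and $\sqrt{\lambda\eps}$. The only difference is your explicit radial-retraction justification of the closure formulation (\ref{maxmonoBR}), which the paper states without comment; that step is a harmless and correct addition.
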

\begin{proof}
Let $\eps\ge 0$ and let $(x,x^*)\in T^\eps$. Since $T(x+.)+\lambda J$ has a dense range,
we can find a sequence $(x_n^*)\subset X^*$ with $x_n^*\to x^*$
and a sequence $(y_n)\subset X$ such that $x_n^*\in T(x+y_n)+\lambda Jy_n$.
Let $(y_n^*)\subset X^*$ such that
$$y_n^*\in T(x+y_n)\quad\mbox{and}\quad x_n^*-y_n^*\in \lambda Jy_n.$$
By definition of $J$, we have
\begin{equation}\label{dudu2}
\lambda^{-1}\la x_n^*-y_n^*, y_n\ra=\|\lambda^{-1}(x_n^*-y_n^*)\|^2=\|y_n\|^2.
\end{equation}
Since $x^*\in T^\eps x$ and $y_n^*\in T(x+y_n)$, we derive that
$\la x^*-y_n^*, y_n\ra\le \eps$, hence
\begin{equation*}\label{toto}
\lambda\|y_n\|^2=\la x_n^*-x^*, y_n\ra+\la x^*-y_n^*, y_n\ra
\le \la x_n^*-x^*, y_n\ra+\eps
\le \|x_n^*-x^*\|\|y_n\|+\eps.
\end{equation*}
Therefore, $\lambda\|y_n\|^2-\|x_n^*-x^*\|\|y_n\|-\eps\le 0$, so we must have
\begin{equation}\label{toto2}
\|y_n\|\le (\|x_n^*-x^*\|+\sqrt{\|x_n^*-x^*\|^2+4\eps\lambda})/2\lambda.
\end{equation}
From (\ref{toto2}) we derive that
$$
\limsup_n \|y_n\|\le \sqrt{\lambda^{-1}\eps},
$$
so, by (\ref{dudu2}),
$$
\limsup_n \|x_n^*-y_n^*\|=\limsup_n \lambda\|y_n\|\le \sqrt{\lambda\eps}.
$$
In conclusion we have
\begin{equation*}\label{toto3}
(x+y_n,y_n^*)\in T,\quad
\limsup_n \|x-(x+y_n)\|\le \sqrt{\lambda^{-1}\eps},\quad
\limsup_n \|x^*-y_n^*\|\le \sqrt{\lambda\eps},
\end{equation*}
and without loss of generality we can replace $\limsup_n$ by $\lim_n$.
This completes the proof.
\end{proof}

For a monotone operator $T$, Property (\ref{maxmonoBR}) for every $\lambda>0$ amounts
to the so-called maximal monotonicity of Br{\o}ndsted-Rockafellar type
studied in Simons \cite{Sim99,Simbook08} (see also \cite{MS08b}).
In reflexive Banach spaces, all maximal monotone operators satisfy this property.
In non-reflexive spaces, not all maximal monotone operators satisfy it,
see, e.g. \cite[Example 47.9]{Simbook08}, but subdifferentials of
proper \lsc\ convex functions do, see \cite[Theorem 13.1 (b)]{Sim99}
or \cite[Theorem 48.4 (g)]{Simbook08}.
Our main result below,
which follows readily by combining Propositions \ref{proxbo} and \ref{epsmono},
extends this latter result to
the class of prox-bounded non necessarily convex functions,
with a more direct proof:
\begin{theorem}\label{main}
Let $X$ be a Banach space and let $f:X\to\Rex$ be proper, \lsc\ and prox-bounded
with threshold $\lambda_f\ge 0$.
Then: 
\begin{equation}\label{maxmonoBR2}
\forall \lambda>\lambda_f,\, \forall \eps\ge 0, \quad
(\del f)^\eps \subset \cl\left(\del f+\sqrt{\lambda^{-1}\eps}B_X \times \sqrt{\lambda\eps}B_{X^*}\right).
\end{equation}
Equivalently: for all $\lambda>\lambda_f$ and $\eps\ge 0$,
$$
(x^*,x)\in(\del f)^\eps \Rightarrow \exists ((x^*_n,x_n))_n\subset
\del f : \lim_n \|x-x_n\|\le\sqrt{\lambda^{-1}\eps}\ \&\  
\lim_n \|x^*-x^*_n\|\le\sqrt{\lambda\eps}.
$$
\end{theorem}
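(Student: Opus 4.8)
The plan is to derive Theorem \ref{main} as an immediate composition of the two preceding propositions, with essentially no new computation. First I would fix $\lambda>\lambda_f$ once and for all and check that the hypothesis of Proposition \ref{epsmono} is met by the operator $T:=\del f$ for this particular $\lambda$. This is exactly the assertion of Proposition \ref{proxbo}: since $f$ is proper, \lsc\ and prox-bounded with threshold $\lambda_f<\lambda$, one has $\cl(R(\del f(x+\cdot)+\lambda J))=X^*$ for every $x\in X$, which is precisely the density condition $\cl(R(T(x+\cdot)+\lambda J))=X^*$ required in Proposition \ref{epsmono}.

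Next I would simply invoke Proposition \ref{epsmono} with $T=\del f$. Its conclusion (\ref{maxmonoBR}) then reads $(\del f)^\eps\subset\cl(\del f+\sqrt{\lambda^{-1}\eps}B_X\times\sqrt{\lambda\eps}B_{X^*})$ for every $\eps\ge 0$, which is exactly (\ref{maxmonoBR2}) for the chosen $\lambda$; since $\lambda>\lambda_f$ was arbitrary, (\ref{maxmonoBR2}) holds for all $\lambda>\lambda_f$ and all $\eps\ge 0$. The ``equivalently'' reformulation requires nothing more than unravelling the closure into the existence of a sequence $((x_n,x^*_n))_n\subset\del f$ with $\limsup_n\|x-x_n\|\le\sqrt{\lambda^{-1}\eps}$ and $\limsup_n\|x^*-x^*_n\|\le\sqrt{\lambda\eps}$, the $\limsup$ being replaceable by $\lim$ — which is literally the sequential statement produced inside the proof of Proposition \ref{epsmono}.

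There is no genuine obstacle here; the entire substance lives in Propositions \ref{proxbo} and \ref{epsmono} (hence ultimately in Ekeland's principle, the Separation Principle, and the quadratic estimate (\ref{toto2})). The only point worth a word of care is the bookkeeping of the notation $\del f(x+\cdot)$: in Proposition \ref{proxbo} it denotes the translated operator $h\mapsto\del f(x+h)$, which is exactly the object appearing in the hypothesis of Proposition \ref{epsmono} when $T=\del f$, so no separate translation identity for $\del$ is needed — Proposition \ref{proxbo} is already stated uniformly over all base points $x$. Thus the proof is the one-line chain: Proposition \ref{proxbo} supplies the dense-range hypothesis, Proposition \ref{epsmono} converts it into the $\eps$-monotone absorption estimate.
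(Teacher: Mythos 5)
Your proposal is correct and follows exactly the paper's own route: Theorem \ref{main} is obtained by using Proposition \ref{proxbo} to verify, for each $\lambda>\lambda_f$, the dense-range hypothesis of Proposition \ref{epsmono} with $T=\del f$, and then reading off the inclusion (\ref{maxmonoBR2}) and its sequential reformulation. Nothing further is needed.
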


\noindent{\it Final remark.}
Br{\o}ndsted-Rockafellar's approximation theorem can be stated as follows:
for any proper \lsc\ convex function $f$ on a Banach space $X$ one has
\begin{equation}\label{maxmonoBR3}
\forall \lambda>0,\, \forall \eps\ge 0, \quad
\del_\eps f \subset \del f+\sqrt{\lambda^{-1}\eps}B_X \times \sqrt{\lambda\eps}B_{X^*},
\end{equation}
where 
$
\partial_\eps f(x):=
\bigl\{x^*\in X^*: \langle x^*, y-x \rangle +f(x)\le f(y)+\eps,\,\forall y\in X\bigr\}.
$
Clearly, $\del_\eps f\subset (\del f)^\eps$ and the inclusion is generally proper.
So, (\ref{maxmonoBR2}) implies (\ref{maxmonoBR3}) (with a slightly relaxed right hand side).
The converse is also true but the proof is not as straightforward, as we have seen in this paper.
In fact, (\ref{maxmonoBR2}) is a more accurate inclusion, as the special case $\eps=0$
already shows.
{\small

}


\begin{thebibliography}{10}

\bibitem{Brebook73}
H.~Br{\'e}zis.
\newblock {\em Op\'erateurs maximaux monotones et semi-groupes de contractions
  dans les espaces de {H}ilbert}.
\newblock North-Holland Publishing Co., Amsterdam, 1973.
\newblock North-Holland Mathematics Studies, No. 5. Notas de Matem{\'a}tica
  (50).

\bibitem{BR65}
A.~Br{\o}ndsted and R.~T. Rockafellar.
\newblock On the subdifferentiability of convex functions.
\newblock {\em Proc. Amer. Math. Soc.}, 16:605--611, 1965.

\bibitem{Eke74}
I.~Ekeland.
\newblock On the variational principle.
\newblock {\em J. Math. Anal. Appl.}, 47:324--353, 1974.

\bibitem{Iof12}
A.~D. Ioffe.
\newblock On the theory of subdifferentials.
\newblock {\em Adv. Nonlinear Anal.}, 1(1):47--120, 2012.

\bibitem{JL12}
F.~Jules and M.~Lassonde.
\newblock Subdifferential estimate of the directional derivative, optimality
  criterion and separation principles.
\newblock {\em Optimization}, \url{http://dx.doi.org/10.1080/02331934.2011.645034}.

\bibitem{JL13}
F.~Jules and M.~Lassonde.
\newblock Subdifferential test for optimality.
\newblock {\em J. Global Optim.}, \url{http://dx.doi.org/10.1007/s10898-013-0078-6}.

\bibitem{MS08b}
M.~Marques~Alves and B.~F. Svaiter.
\newblock Br\o ndsted-{R}ockafellar property and maximality of monotone
  operators representable by convex functions in non-reflexive {B}anach spaces.
\newblock {\em J. Convex Anal.}, 15(4):693--706, 2008.

\bibitem{MS08}
M.~Marques~Alves and B.~F. Svaiter.
\newblock A new proof for maximal monotonicity of subdifferential operators.
\newblock {\em J. Convex Anal.}, 15(2):345--348, 2008.

\bibitem{Min62}
G.~J. Minty.
\newblock Monotone (nonlinear) operators in {H}ilbert space.
\newblock {\em Duke Math. J.}, 29:341--346, 1962.

\bibitem{Mor65}
J.-J. Moreau.
\newblock Proximit\'e et dualit\'e dans un espace hilbertien.
\newblock {\em Bull. Soc. Math. France}, 93:273--299, 1965.

\bibitem{Phe93}
R.~R. Phelps.
\newblock {\em Convex functions, monotone operators and differentiability},
  volume 1364 of {\em Lecture Notes in Mathematics}.
\newblock Springer-Verlag, Berlin, second edition, 1993.

\bibitem{Roc70b}
R.~T. Rockafellar.
\newblock On the maximal monotonicity of subdifferential mappings.
\newblock {\em Pacific J. Math.}, 33:209--216, 1970.

\bibitem{RW98}
R.~T. Rockafellar and R.~J.-B. Wets.
\newblock {\em Variational analysis}, volume 317 of {\em Grundlehren der
  Mathematischen Wissenschaften [Fundamental Principles of Mathematical
  Sciences]}.
\newblock Springer-Verlag, Berlin, 1998.

\bibitem{Sim99}
S.~Simons.
\newblock Maximal monotone multifunctions of {B}r\o ndsted-{R}ockafellar type.
\newblock {\em Set-Valued Anal.}, 7(3):255--294, 1999.

\bibitem{Simbook08}
S.~Simons.
\newblock {\em From {H}ahn-{B}anach to monotonicity}, volume 1693 of {\em
  Lecture Notes in Mathematics}.
\newblock Springer, New York, second edition, 2008.

\bibitem{Sim09}
S.~Simons.
\newblock A new proof of the maximal monotonicity of subdifferentials.
\newblock {\em J. Convex Anal.}, 16(1):165--168, 2009.

\end{thebibliography}
\end{document}